  \newcommand{\cref}[1]{\ref{#1}}
\theoremstyle{definition}
\newtheorem{definition}{Definition}
\newtheorem{proposition}{Proposition}
\newtheorem{lemma}{Lemma}
\newtheorem{theorem}{Theorem}
\newtheorem{corollary}{Corollary}
\newcommand{\R}{\mathbb{R}}
\newcommand{\Lim}{\mathbf{Lim}}
\newcommand{\card}{\mathop{\mathrm{card}}}
\newcommand{\imp}{\rightarrow}
\newcommand{\eqdef}{\mathrel{\mathop:}=}
\renewcommand{\epsilon}{\varepsilon}
\renewcommand{\phi}{\varphi}
\renewcommand{\leq}{\leqslant}
\renewcommand{\le}{\leqslant}
\renewcommand{\geq}{\geqslant}
\renewcommand{\ge}{\geqslant}
\newcommand\norm[1]{\left\lVert#1\right\rVert}
\newcommand{\sizednorm}[2]{#1\lVert#2#1\rVert}
\newcommand\ind{\mathbf{1}}
\newcommand{\ol}[1]{\overline{#1}}
\begin{document}

\title[\resizebox{4.5in}{!}{Rearranging absolutely convergent well-ordered series in Banach spaces}]{Rearranging absolutely convergent well-ordered series in Banach spaces}
\author[V.~Čačić]{Vedran Čačić}
\author[M.~Doko]{Marko Doko}
\author[M.~Horvat]{Marko Horvat}
\address[V.~Čačić, M.~Horvat]{Department of Mathematics, Faculty of Science, University of Zagreb, Bijenička 30, 10000 Zagreb, Croatia}
\email[V.~Čačić]{veky@math.hr}
\email[M.~Horvat]{mhorvat@math.hr}
\address[M.~Doko]{Max Planck Institute for Software Systems, Paul-Ehrlich-Str.~26, 67663 Kaiserslautern, Germany}
\email[M.~Doko]{mdoko@mpi-sws.org}

\keywords{well-ordered series, summability, absolute convergence, reordering, Banach spaces, ordinals}

\begin{abstract}
Reordering the terms of a series is a useful mathematical device, and much is
known about when it can be done without affecting the convergence or the sum of
the series. For example, if a series of real numbers absolutely converges, we
can add the even-indexed and odd-indexed terms separately, or arrange the terms
in an infinite two-dimensional table and first compute the sum of each column. The possibility of
even more intricate reorderings prompts us to find a
general underlying principle. We identify such a principle in the setting of
Banach spaces, where we consider well-ordered series with indices beyond $\omega$, but strictly under $\omega_1$. We prove
that for every absolutely convergent well-ordered series indexed by a countable
ordinal, if the series is rearranged according to any countable ordinal,
then the absolute convergence and the sum of the series remain unchanged.
\end{abstract}

\maketitle

\section{Introduction}
During a typical mathematical upbringing, one is likely to encounter a task of adding real numbers where changing the order of summation turns out to be the trick to solving the task. The sum to be computed may involve finitely many terms, such as when proving that the sum of vertex degrees $d(v)$ in every finite undirected graph $(V,E)$ is twice the number $|E|$ of edges:
$$\sum_{v\in V}d(v)=\sum_{v\in V}\sum_{e\in E}\ind_{e}(v)=\sum_{e\in E}\sum_{v\in V}\ind_{e}(v)=\sum_{e\in E}2=2|E|.$$
It might even require adding infinitely many terms, which is known as computing the sum of a \emph{series}:
$$\frac12+\frac24+\frac38+\cdots+\frac{n}{2^n}+\cdots=$$
$$\begin{array}{ccccccccc}
=&\frac12&+&\frac14&+&\frac18&+&\cdots&+\\[4pt]
+&       & &\frac14&+&\frac18&+&\cdots&+\\[4pt]
+&       & &       & &\frac18&+&\cdots&+\\[-2pt]
+&       & &       & &       & &\ddots&=\\[-2pt]
\end{array}$$

$$=1+\frac12+\frac14+\cdots=2\;.$$
However, tricks that work in the finite case may not always be applicable when dealing with infinite series:
$$1\stackrel{?}{=}1+1+(-1)+1+(-1)+\cdots\stackrel{?}{=}2+(-1)+1+(-1)+1+\cdots\stackrel{?}{=}2.$$

The aspiring mathematics student will eventually learn when the order of summation for a given series can be changed without disturbing its convergence or its sum. For example, if a series contains only non-negative terms or, more generally, if it is absolutely convergent, then it is possible to:
\begin{itemize}
\item reorder the terms by subjecting their indices to some permutation of $\omega$,
\item split the series into an odd-indexed and an even-indexed part, sum each of the two parts separately, and add the results,
\item arrange the terms into an $\omega\times\omega$ matrix, then sum each column, and then all the results, etc.
\end{itemize}

A natural question to ask is whether all these results are special cases of some general principle. Before we give a positive answer to this question, we need to discuss our choice of \emph{setting} as well as the generalization of the notion of \emph{sequence reordering}. For the former, we choose Banach spaces as convenient ambient spaces that allow us to compute sums, take their unique limits, measure them with the defined norm, and infer convergence from absolute convergence when needed. For the latter, sequences are typically reordered by composition with some bijection from $\omega$ to $\omega$. This means that each term of the reordered sequence is preceded by finitely many other terms. In this work, we consider bijections between $\omega$ and some ordinal number $\alpha$, where ``sequence'' terms might have infinite indices. For example, the three previously mentioned reorderings are cases where $\alpha$ is respectively $\omega$, $\omega\cdot2$, and $\omega^2$.

The corresponding extension of the notion of series is known as \emph{well-ordered series}. We show that for every absolutely convergent well-ordered series indexed by a countable ordinal, if the series is rearranged according to some other countable ordinal, then the absolute convergence and the sum of the series remain unchanged.

In order to make our main contributions outlined in the previous paragraph fully precise, we provide a discussion of the related work as well as the necessary background in the following sections.

\section{Related work}
A plethora of results were being published throughout the 20th century that cover a vast range of topics at the intersection of set theory and analysis, including the absolute convergence of series in normed spaces~\cite{Dvoretzky1950,Dieudonne1960,Hildebrandt1940}. Some publications include the study of well-ordered series in various contexts~\cite{Dieudonne1960,Komjath1977,Shepherdson1950}, which even extends to recent years~\cite{Heikkila2013}. However, to the best of our knowledge, the effects of the reorderings outlined in the previous section on the absolute convergence of well-ordered series in Banach spaces have not yet been thoroughly explored.

In the book \emph{Foundations of modern analysis}~\cite{Dieudonne1960}, Dieudonné defines absolutely convergent well-ordered series $\Sigma(a_i)_{i\in\alpha}$ in Banach spaces by considering the absolute convergence of the series $\Sigma(a_{f(i)})_{i\in\omega}$ for any bijection $f\colon\omega\to\alpha$. The notion is well defined because it does not depend on the choice of bijection. This is also true for the notion of its sum, which can be lifted analogously. We choose more general definitions of well-ordered series, their convergence and sum, which do not depend on absolute convergence, and expand on the work of Dieudonné by investigating the relationship of differently indexed well-ordered series with the same terms.

\section{Background}

We define ordinals in the von Neumann sense; each ordinal is the set of all smaller ordinals. For example, $\omega$ is the set of all finite ordinals, and $\omega_1$ is the set of all countable ordinals. There are three kinds of ordinals: zero (denoted by $0$), successor ordinals and limit ordinals. We write $\Lim$ for the class of all limit ordinals (those that are neither $0$ nor successors).

A \emph{normed space} $(X,\norm{\cdot})$ is a vector space $X$ equipped with a norm $\norm{\cdot}$. We sometimes omit the norm and simply refer to $(X,\norm{\cdot})$ as $X$. If a normed space $X$ is complete, it is called a \emph{Banach space}. For an ordinal $\beta\in\omega_1\setminus\omega$ and a normed space $X$, we call a function $a\colon\beta\to X$ a \emph{hypersequence}\footnote{Similar functions are sometimes called \emph{$\beta$-sequences}~\cite{Aguilera2015} to make the index domain explicit.}. We denote the value of a hypersequence $a$ at argument (\emph{index}) $\alpha$ by $a_\alpha$. If we want to emphasize the domain, we write $(a_\alpha)_{\alpha\in\beta}$ instead of $a$. Note that we only consider countably infinite domains because of the known fact that convergent series with uncountably many positive terms contain only countably many non-zero terms~\cite{CM}.

A common way to define the sum of a series involves limits of partial sum sequences. We define the notion of \emph{well-ordered series} in an analogous fashion; we call the main building block a \emph{partial sum hypersequence}.
\begin{definition} \label{def:hypersequence}
Let $(X,\norm{\,\cdot\,})$ be a normed space. We say that the hypersequence $s\colon\gamma\to X$ is a \emph{partial sum hypersequence} of a hypersequence $a\colon\beta\to X$, if the following conditions hold:
\begin{enumerate}
\item[(1)] $1\le\gamma\le\beta+1$
\item[(2)] $s_0=0$
\item[(3)] $(\forall\alpha\in\gamma)(\alpha+1\in\gamma\Rightarrow s_{\alpha+1}=s_\alpha+a_\alpha)$
\item[(4)] $(\forall\lambda\in\gamma)(\lambda\in\Lim\Rightarrow s_\lambda=\displaystyle\lim_{\alpha\to\lambda}s_\alpha)$
\end{enumerate}
By the limit in condition \textup{(4)}, we mean
	$$(\forall\varepsilon\in\R^+)(\exists\alpha_0\in\lambda)(\forall\alpha\in\lambda)(\alpha\ge\alpha_0\Rightarrow\norm{s_\lambda-s_\alpha}<\varepsilon).$$
\end{definition}

Hypersequences are naturally partially ordered by restriction (as functions), which is the same as the subset relation when we consider functions as sets of ordered pairs.
\begin{proposition}
For every hypersequence $a$, there exists the largest partial sum hypersequence of $a$ (that is, the one of which all other partial sum hypersequences of $a$ are restrictions).
\end{proposition}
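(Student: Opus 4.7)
The plan is to construct the desired hypersequence $S$ by transfinite recursion, halting whenever an obstruction appears, and then to show via a compatibility lemma that every partial sum hypersequence of $a$ is a restriction of $S$. The compatibility lemma asserts that any two partial sum hypersequences $s \colon \gamma \to X$ and $t \colon \delta \to X$ of $a$ agree on $\gamma \presjek \delta$; it is proved by transfinite induction on $\alpha < \min(\gamma, \delta)$, using condition (2) at $\alpha = 0$, condition (3) at successor steps, and condition (4) together with the uniqueness of limits in the normed space $X$ at limit steps.

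To construct $S$, write $\beta$ for $\dom{a}$ and proceed by transfinite recursion on $\beta + 1$. Set $S_0 \eqdef 0$; for a successor index $\alpha + 1 \le \beta$ with $S_\alpha$ already defined, set $S_{\alpha + 1} \eqdef S_\alpha + a_\alpha$; for a limit ordinal $\lambda \le \beta$ with $S_\mu$ defined for every $\mu < \lambda$, set $S_\lambda \eqdef \lim_{\mu \to \lambda} S_\mu$ provided this limit exists in $X$, and otherwise halt. Let $\gamma^*$ denote the ordinal on which $S$ is ultimately defined. By construction $1 \le \gamma^* \le \beta + 1$, and conditions (2)--(4) of Definition \ref{def:hypersequence} are immediate, so $S$ is itself a partial sum hypersequence of $a$.

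For maximality, let $s \colon \gamma \to X$ be any partial sum hypersequence of $a$. By the compatibility lemma, $s$ and $S$ agree on $\min(\gamma, \gamma^*)$, so it suffices to show $\gamma \le \gamma^*$. If $\gamma^* = \beta + 1$ this follows from condition (1) for $s$. Otherwise the construction halted at a limit ordinal $\gamma^*$ because $\lim_{\mu \to \gamma^*} S_\mu$ fails to exist in $X$; were $\gamma^* \in \gamma$, then by compatibility $s_\mu = S_\mu$ for every $\mu < \gamma^*$, and condition (4) applied to $s$ at $\gamma^*$ would produce a value for that very nonexistent limit, a contradiction.

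The main obstacle I anticipate is precisely this limit case in the maximality argument: one has to rule out the possibility that some partial sum hypersequence $s$ reaches strictly beyond $S$ by ``jumping over'' an ordinal at which the partial sums fail to converge. The compatibility lemma is exactly what prevents this, by forcing $s$ to inherit the same obstruction as $S$.
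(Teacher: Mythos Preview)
Your proof is correct, and it shares with the paper the key compatibility lemma (any two partial sum hypersequences agree on the intersection of their domains, by transfinite induction using uniqueness of limits in a Hausdorff space). Where you diverge is in how the maximal object is produced: the paper simply observes that the collection of all partial sum hypersequences of $a$ is a nonempty set bounded inside $\partitivni{(\beta+1)\times X}$ and takes its union, which is again a partial sum hypersequence by compatibility and is trivially the largest. You instead build $S$ explicitly by transfinite recursion, halting at the first limit ordinal where convergence fails, and then use compatibility to argue that no other partial sum hypersequence can overshoot $\gamma^*$. Your route is slightly longer but more informative: it makes visible that the domain $\gamma^*$ is either $\beta+1$ or a limit ordinal at which the partial sums fail to converge, which is exactly the content of the paper's next proposition. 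The paper's union argument is slicker but defers that structural insight to a separate statement.
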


\begin{proof}
	It is possible to show, by transfinite induction, that every two partial sum hypersequences of $a$ must agree on the intersection of their domains. Namely, the conditions from the definition prescribe the values of $s$ on $0$, successors and limit ordinals in its domain, using previous values (existing limits are unique because the codomain is a normed space, hence Hausdorff).

Moreover, because their domain is bounded from above, all partial sum hypersequences of $a$ form a set---a subset of $\mathcal P\bigl((\beta+1)\times X\bigr)$. That set is also nonempty, since it contains the trivial hypersequence $\langle0\rangle$ with domain~$1$. But then the union of that set is also a partial sum hypersequence of $a$, and it is obviously the largest one.
\end{proof}

Since it is the largest, it is unique for every $a$, and we denote it by $\sigma a$. We can now give a precise definition of the notion of \emph{well-ordered series}.

\begin{definition}
Let $a$ be a hypersequence and $\sigma a$ its largest partial sum hypersequence. We define the \emph{well-ordered series} $\Sigma a$ as the ordered pair $(a,\sigma a)$.
\end{definition}

\begin{proposition}\label{domain}
For every hypersequence $a\colon\beta\to X$, the domain of $\sigma a$ is either $\beta+1$ or some limit ordinal not greater than $\beta$.
\end{proposition}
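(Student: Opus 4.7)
The plan is to determine the structure of $\gamma \eqdef \dom{\sigma a}$ by ruling out the problematic case: a successor ordinal strictly less than $\beta+1$. Since condition (1) gives $1\le\gamma\le\beta+1$, we have $\gamma>0$, so $\gamma$ is either a successor or a limit ordinal. My goal is to show that the successor case forces $\gamma=\beta+1$.

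Suppose $\gamma=\delta+1$ is a successor, and assume for contradiction that $\delta<\beta$. Then $a_\delta$ is defined (since $\delta\in\beta$), so we can form a candidate extension $s'\colon\delta+2\to X$ by setting $s'\restrikcija_\gamma\eqdef\sigma a$ and $s'_{\delta+1}\eqdef(\sigma a)_\delta+a_\delta$. The key step is to verify that $s'$ is a partial sum hypersequence of $a$, which then contradicts the maximality of $\sigma a$. The verification is direct: condition (1) holds because $\delta+1\le\beta$ gives $\delta+2\le\beta+1$; condition (2) is inherited from $\sigma a$; condition (3) holds for indices in $\delta+1$ by hypothesis on $\sigma a$ and at the new index $\delta+1$ by the defining equation for $s'_{\delta+1}$; condition (4) is unaffected because any limit ordinal $\lambda\in\delta+2$ must already lie in $\delta+1$ (since $\delta+1$ is not a limit). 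Hence $\gamma$ being a successor forces $\gamma=\beta+1$.

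The remaining case is straightforward: if $\gamma$ is a limit, then $\gamma\neq\beta+1$ because $\beta+1$ is always a successor, and combined with $\gamma\le\beta+1$ this yields $\gamma\le\beta$. Together with the successor case, this establishes the claim.

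There is no real obstacle here, just bookkeeping; the only point requiring attention is the routine checking of conditions (1)--(4) for the extension $s'$, and in particular noticing that condition (4) is trivial at a newly added successor index so that no continuity obligation is imposed on the extension.
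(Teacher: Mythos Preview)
Your proof is correct and follows essentially the same approach as the paper's: both rule out a successor domain $\gamma=\delta+1<\beta+1$ by extending $\sigma a$ at index $\delta+1$ via $(\sigma a)_\delta+a_\delta$ and invoking maximality, then handle the limit case by the trivial observation that a limit cannot equal the successor $\beta+1$. The only difference is cosmetic: you spell out the verification of conditions (1)--(4) for the extension, whereas the paper leaves this implicit.
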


\begin{definition}
In the former case, we say that the well-ordered series $\Sigma a$ \emph{converges}, and $(\sigma a)_\beta\in X$ is called the \emph{sum} of $\Sigma a$, and denoted by $\sum_{\alpha\in\beta}a_\alpha$. In the latter case, we say that the well-ordered series $\Sigma a$ \emph{diverges}.
\end{definition}

\begin{proof}[Proof of Proposition~\cref{domain}]
We denote the domain of $\sigma a$ by $\gamma$. The first condition of Definition~\cref{def:hypersequence} says that $\gamma$ is not zero, so it is either a successor or a limit ordinal.

\begin{itemize}
\item First assume $\gamma$ is a successor, different from $\beta+1$. The first condition of Definition~\cref{def:hypersequence} says that it cannot be larger than $\beta+1$, so it must be smaller: $\gamma=\delta+1\in\beta+1$. But then $\delta\in\gamma$, which means that $(\sigma a)_\delta$ is defined, and also $\delta\in\beta$, which means $a_\delta$ is defined. Adding those two numbers, we can extend $\sigma a$ to its proper superset
$$\sigma a\cup\big\{\big(\delta+1,(\sigma a)_\delta+a_\delta\big)\big\}$$
which is also a partial sum hypersequence of $a$, contradicting $\sigma a$ being the largest one. So if $\gamma$ is a successor, it must be the successor of $\beta$.
\item Now assume $\gamma$ is a limit ordinal. The first condition of Definition~\cref{def:hypersequence} implies that it is not greater than $\beta+1$, and it cannot be equal to $\beta+1$ because it is a limit ordinal. Therefore, it must be smaller than $\beta+1$, i.e., not greater than $\beta$.\hfill\qed
\end{itemize}\let\qed\relax
\end{proof}

The concept of absolute convergence can be directly extended to well-ordered series.
\begin{definition}
We say that the well-ordered series $\Sigma(a_\alpha)_{\alpha\in\beta}$ \emph{absolutely converges} if the well-ordered series $\Sigma(\norm{a_\alpha})_{\alpha\in\beta}$ converges.
\end{definition}

It is straightforward to verify that standard series with real terms can be identified with well-ordered series over $\beta=\omega$, and the definitions of convergence and absolute convergence in that case coincide with our definitions. The domain $\gamma$ of each partial sum hypersequence $s$ of $a$ is in that case either $\omega$ (if the series diverges) or $\omega+1$ (if it converges). From this point on, we use the terms \emph{well-ordered series} and \emph{series}, and the notation $\Sigma(a_i)_{i\in\omega}$ and $\sum_{i=0}^\infty a_i$, interchangeably.

We now formulate and prove two simple lemmas that will be an often used tool in the following section. They are a direct result of the topological fact that limits commute with continuous functions, and they let us sum a series by first summing two disjoint parts, and then adding the two results together.

\begin{lemma}\label{split}
	Let $X$ be a normed space, $(a_i)_{i\in\omega}$ a sequence in $X$, and $S\subseteq\omega$. If the series $\Sigma\big(a_i\cdot\ind_S(i)\big)_{i\in\omega}$ and $\Sigma\big(a_i\cdot\ind_{\omega\setminus S}(i)\big)_{i\in\omega}$ converge, then the series $\Sigma(a_i)_{i\in\omega}$ also converges and \[\sum_{i=0}^\infty a_i=\sum_{i=0}^\infty\big(a_i\cdot\ind_S(i)\big)+\sum_{i=0}^\infty \big(a_i\cdot\ind_{\omega\setminus S}(i)\big).\]
\end{lemma}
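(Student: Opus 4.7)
The plan is to exploit the pointwise decomposition
\[ a_i = a_i \cdot \ind_S(i) + a_i \cdot \ind_{\omega\setminus S}(i) \qquad (i \in \omega), \]
which holds because $\ind_S(i) + \ind_{\omega\setminus S}(i) = 1$. Setting $b_i := a_i \cdot \ind_S(i)$ and $c_i := a_i \cdot \ind_{\omega\setminus S}(i)$, the hypothesis says that the partial sum hypersequences $\sigma b$ and $\sigma c$ both have domain $\omega+1$. I would then define a candidate hypersequence $s \colon \omega + 1 \to X$ by $s_n := (\sigma b)_n + (\sigma c)_n$ and check that it satisfies the four conditions of Definition~\ref{def:hypersequence} for $a$, from which the result will follow by maximality of $\sigma a$.

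Conditions (1) and (2) are immediate. For the successor condition (3), for every $n \in \omega$ one computes
\[ s_{n+1} = (\sigma b)_{n+1} + (\sigma c)_{n+1} = \big((\sigma b)_n + b_n\big) + \big((\sigma c)_n + c_n\big) = s_n + a_n. \]
The only nontrivial case is the limit condition (4) at $\lambda = \omega$, where I would invoke the continuity of vector addition in $X$: since $(\sigma b)_n$ and $(\sigma c)_n$ both converge, so does their sum, and its limit equals $(\sigma b)_\omega + (\sigma c)_\omega = s_\omega$, which is exactly what (4) demands. Consequently $s$ is a partial sum hypersequence of $a$ with domain $\omega + 1$, so by Proposition~\ref{domain} and the maximality of $\sigma a$ we get $s = \sigma a \restrikcija (\omega+1) = \sigma a$. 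Thus $\Sigma a$ converges and
\[ \sum_{i=0}^\infty a_i = (\sigma a)_\omega = s_\omega = \sum_{i=0}^\infty \big(a_i \cdot \ind_S(i)\big) + \sum_{i=0}^\infty \big(a_i \cdot \ind_{\omega \setminus S}(i)\big). \]

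There is no real obstacle here: the proof is a routine unpacking of Definition~\ref{def:hypersequence}, and the only analytic input is the classical fact, flagged by the authors just before the statement, that limits of sequences commute with continuous functions, specialized to the two-variable map $(x,y) \mapsto x + y$. This follows from the triangle inequality by a standard $\varepsilon/2$ argument and is independent of the particular normed space $X$.
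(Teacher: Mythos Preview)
Your proof is correct and follows essentially the same route as the paper's: both rely on the pointwise decomposition $a_i = a_i\cdot\ind_S(i) + a_i\cdot\ind_{\omega\setminus S}(i)$ and the continuity of addition to pass the limit through the sum. The only difference is cosmetic---the paper writes the argument as a direct classical computation with $\lim_n$ of partial sums, while you recast the same computation as a verification of Definition~\ref{def:hypersequence} for the candidate $s = \sigma b + \sigma c$.
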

\begin{proof}

We compute
\begin{align*}
	\sum_{i=0}^\infty a_i&=\lim_n\sum_{i=0}^n \left(a_i\cdot\big(\ind_S(i)+\ind_{\omega\setminus S}(i)\big)\right)\\&=\lim_n\left(\sum_{i=0}^n\big(a_i\cdot\ind_S(i)\big)+\sum_{i=0}^n \big(a_i\cdot\ind_{\omega\setminus S}(i)\big)\right)\\&\stackrel{(\ast)}{=}\lim_n\sum_{i=0}^n \big(a_i\cdot\ind_S(i)\big)+\lim_n\sum_{i=0}^n\big(a_i\cdot\ind_{\omega\setminus S}(i)\big)\\&=\sum_{i=0}^\infty\big(a_i\cdot\ind_S(i)\big)+\sum_{i=0}^\infty\big(a_i\cdot\ind_{\omega\setminus S}(i)\big).
\end{align*}
We know that the equality marked with $(\ast)$ holds because the addition function, $+\colon X\times X\to X$, is continuous with respect to the norm topology on $X$ and the product topology on $X\times X$, and taking limits of convergent sequences commutes with applying continuous functions.
\end{proof}

\begin{lemma}\label{abssplit}
	Let $X$ be a Banach space, $(a_i)_{i\in\omega}$ a sequence in $X$, and $S\subseteq\omega$. Then the series $\Sigma(a_i)_{i\in\omega}$ absolutely converges if and only if the series $\Sigma(a_i\cdot\ind_S(i))_{i\in\omega}$ and $\Sigma(a_i\cdot\ind_{\omega\setminus S}(i))_{i\in\omega}$ absolutely converge. Moreover, \[\sum_{i=0}^\infty a_i=\sum_{i=0}^\infty (a_i\cdot\ind_S(i))+\sum_{i=0}^\infty (a_i\cdot\ind_{\omega\setminus S}(i)).\]
\end{lemma}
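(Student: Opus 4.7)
The plan is to reduce both the equivalence of absolute convergence and the sum equality to Lemma~\ref{split}, applied first to a real-valued series of norms and then to the original vector-valued series.

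For the \emph{iff}, I start from the observation that, since $\ind_S(i)\in\{0,1\}$ for every $i\in\omega$, we have $\norm{a_i\cdot\ind_S(i)}=\norm{a_i}\cdot\ind_S(i)$ and analogously for $\omega\setminus S$, together with the pointwise identity
$$\norm{a_i}=\norm{a_i}\cdot\ind_S(i)+\norm{a_i}\cdot\ind_{\omega\setminus S}(i).$$
Thus the absolute convergence of the three series in question is really a statement about the convergence of three non-negative real series: $\Sigma(\norm{a_i})_{i\in\omega}$, $\Sigma(\norm{a_i}\cdot\ind_S(i))_{i\in\omega}$, and $\Sigma(\norm{a_i}\cdot\ind_{\omega\setminus S}(i))_{i\in\omega}$. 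If the two split real series converge, Lemma~\ref{split} (applied in $\R$) gives convergence of the whole. Conversely, if $\Sigma(\norm{a_i})$ converges, its partial sums are bounded; since the terms of the two split real series are non-negative and termwise dominated by $\norm{a_i}$, their partial sums are bounded monotone sequences in $\R$, hence convergent.

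For the sum equality, I use the hypothesis that $X$ is a \emph{Banach} space: absolute convergence of each of $\Sigma(a_i\cdot\ind_S(i))_{i\in\omega}$ and $\Sigma(a_i\cdot\ind_{\omega\setminus S}(i))_{i\in\omega}$ implies their (ordinary) convergence in $X$. Lemma~\ref{split} then applies verbatim and yields the stated identity.

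No real obstacle arises; completeness of $X$ enters at exactly one point, namely in passing from absolute convergence of the two parts to their convergence in $X$, which is the hypothesis needed to invoke Lemma~\ref{split} for the vector-valued series.
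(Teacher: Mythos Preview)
Your proof is correct and follows essentially the same approach as the paper: both directions of the equivalence are handled via the non-negative real series $\Sigma\norm{a_i}$ (comparison/bounded monotone for one direction, Lemma~\ref{split} in $\R$ for the other), and the sum identity is obtained by using completeness of $X$ to pass from absolute convergence of the two parts to convergence, then invoking Lemma~\ref{split}. The only cosmetic difference is that you spell out the identity $\norm{a_i\cdot\ind_S(i)}=\norm{a_i}\cdot\ind_S(i)$ and the bounded-monotone argument explicitly, whereas the paper leaves these implicit.
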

\begin{proof}
Assume that the series $\Sigma(a_i)_{i\in\omega}$ absolutely converges. For all $i\in\omega$, we have $\norm{a_i}\cdot\ind_S(i)\leq\norm{a_i}$ and $\norm{a_i}\cdot\ind_{\omega\setminus S}(i)\leq\norm{a_i}$, so we conclude that the series $\Sigma\big(a_i\cdot\ind_S(i)\big)_{i\in\omega}$ and $\Sigma\big(a_i\cdot\ind_{\omega\setminus S}(i)\big)_{i\in\omega}$ absolutely converge. The converse follows by applying Lemma~\cref{split} to the sequence $(\norm{a_i})_{i\in\omega}$.

If all three series above absolutely converge, they also converge because $X$ is complete, so the desired equality follows directly from Lemma~\cref{split}.
\end{proof}

We additionally prove that the direct comparison test for series can be extended to well-ordered series.
\begin{lemma}\label{dct}
Let $X$ be a Banach space, $\alpha\in\omega_1\setminus\omega$, and $(a_i)_{i\in\alpha},(b_i)_{i\in\alpha}$ hypersequences in $X$ such that for all $i\in\alpha$, $\norm{a_i}\leq\norm{b_i}$. If the series $\Sigma (b_i)_{i\in\alpha}$ absolutely converges, then the series $\Sigma(a_i)_{i\in\alpha}$ absolutely converges and $\sum_{i\in\alpha}\norm{a_i}\leq\sum_{i\in\alpha}\norm{b_i}$.
\end{lemma}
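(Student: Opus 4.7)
The plan is to carry out a single transfinite induction on $\gamma\in\alpha+1$, establishing simultaneously that $\gamma\in\dom{\sigma c}$ and that $(\sigma c)_\gamma\le(\sigma d)_\gamma$, where $c\eqdef(\norm{a_i})_{i\in\alpha}$ and $d\eqdef(\norm{b_i})_{i\in\alpha}$. Absolute convergence of the $b$-series together with Proposition~\cref{domain} guarantees $\dom{\sigma d}=\alpha+1$, so the values $(\sigma d)_\gamma$ are available throughout the induction. Once the induction is complete, $\alpha+1\subseteq\dom{\sigma c}$ combined with clause~(1) of Definition~\cref{def:hypersequence} forces $\dom{\sigma c}=\alpha+1$; this is precisely absolute convergence of the $a$-series, and the instance $\gamma=\alpha$ yields the claimed inequality of sums.

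The base case $\gamma=0$ is immediate, and the successor case is routine: at $\gamma=\delta+1$ one just combines $\norm{a_\delta}\le\norm{b_\delta}$ with the inductive hypothesis at $\delta$. All the actual work sits at a limit $\gamma=\lambda$, where I have to produce $(\sigma c)_\lambda$ in the $\varepsilon$-$\alpha_0$ sense of clause~(4). For this, I will first note by a short preliminary transfinite induction that the real-valued partial sums $(\sigma c)_\delta$ are non-decreasing in $\delta$, which follows from non-negativity of each term and is preserved at limits. By the outer inductive hypothesis these partial sums are all bounded above by $(\sigma d)_\lambda$, so $L\eqdef\sup_{\delta<\lambda}(\sigma c)_\delta$ exists in $\R$. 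Given $\varepsilon>0$, any $\alpha_0<\lambda$ with $(\sigma c)_{\alpha_0}>L-\varepsilon$ witnesses the limit definition by monotonicity, so $(\sigma c)_\lambda=L$, and passing to the supremum in the inductive bound gives $L\le(\sigma d)_\lambda$.

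The main obstacle is therefore the bookkeeping at the limit step: one has to separately verify monotonicity of the real-valued partial sums (it is not stated earlier as a lemma) and then convert the supremum of a bounded monotone transfinite sequence of reals into the $\varepsilon$-$\alpha_0$ formulation of the limit required by Definition~\cref{def:hypersequence}. No further appeal to completeness of $X$ is needed, since the entire argument takes place inside the real-valued series $\Sigma c$ and $\Sigma d$; the Banach hypothesis on $X$ is used only implicitly, in that ``absolute convergence'' is defined by convergence of the norm series in $\R$.
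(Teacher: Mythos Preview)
Your proposal is correct and follows essentially the same route as the paper: a transfinite induction in which the successor step is immediate from $\norm{a_\delta}\le\norm{b_\delta}$ and the limit step is handled by observing that the real-valued partial sums of the norm series are non-decreasing and bounded above, hence converge to their supremum. Your framing (inducting on $\gamma\in\alpha+1$ and explicitly isolating the monotonicity sub-claim) is a bit more careful than the paper's, which inducts on $\alpha$ and uses monotonicity tacitly, but the underlying argument is the same.
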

\begin{proof}
We prove the lemma by transfinite induction. If $\alpha=\beta+1$ for some ordinal $\beta$, from the induction hypothesis we have $$\sum_{i\in\alpha}\norm{a_i}=\sum_{i\in\beta}\norm{a_i}+\norm{a_\beta}\leq\sum_{i\in\beta}\norm{b_i}+\norm{b_\beta}=\sum_{i\in\alpha}\norm{b_i}.$$ If $\alpha\in\Lim$, then for all $\beta\in\alpha$, we let $s_\beta\eqdef\sum_{i\in\beta}\norm{a_i}$ and compute $$s_\beta\leq\sum_{i\in\beta}\norm{b_i}\leq\lim_{\beta\to\alpha}\sum_{i\in\beta}\norm{b_i}=\sum_{i\in\alpha}\norm{b_i}.$$ Hence, the set of real numbers $\{s_\beta:\beta\in\alpha\}$ is bounded from above. Therefore, it has a supremum, which we denote with $L$. But then for all $\epsilon>0$, there exists an $n_\epsilon\in\omega$ such that $L-\epsilon<a_{n_\epsilon}$. For all $n\geq n_\epsilon$, we then have $L-\epsilon<s_{n_\epsilon}\leq s_n\leq L<L+\epsilon$. This implies $|s_n-L|<\epsilon$, so the hypersequence $(s_\beta)_{\beta\in\alpha}$ converges.

We also know that $\lim_{\beta\to\alpha}s_\beta\leq\sum_{i\in\alpha}\norm{b_i}$; otherwise, there would exist $i_0\in\alpha$ such that $a_{i_0}>b_{i_0}$.
\end{proof}

Now that all of the preliminaries are in place, we can analyze the effects of reorderings on the convergence and sum of well-ordered series in Banach spaces. We devote the entirety of the following section to this topic, where we focus on absolutely convergent well-ordered series.

\section{Effects of reordering on absolute convergence and sum}
In this section, we determine the effects of reorderings on the convergence and sum of absolutely convergent well-ordered series in Banach spaces. 

Previously, we proved two simple lemmas that hinted at our general proof strategy; they let us separate a series into two parts, which can be summed individually, and the results added without changing the sum. In a proof by induction, one of the parts may be covered by the induction hypothesis, and what is left might be a single number, or an insignificant quantity. The former will be the case when we look at well-ordered series up to a successor ordinal; we will encounter the latter when we consider limit ordinals, where we make use of the fact that a convergent series has arbitrarily small tail sums. We also use the fact that for an absolutely convergent series $\Sigma a_i$, tail sums of the series $\Sigma\!\norm{a_i}$ cannot be increased by leaving out some terms of $\Sigma\!\norm{a_i}$.

By employing the strategy described in the previous paragraph, we first prove that an absolutely convergent standard series in a Banach space can be rearranged into a well-ordered series with respect to any countably infinite ordinal without affecting the convergence or sum of the series. 
\begin{theorem}\label{toomega}
Let $\Sigma(a_i)_{i\in\omega}$ be an absolutely convergent series in a Banach space $X$. Then for all $\alpha\in\omega_1\setminus\omega$ and all bijections $f\colon\alpha\to\omega$, the well-ordered series $\Sigma(a_{f(i)})_{i\in\alpha}$ absolutely converges and
\[\sum_{i\in\alpha}a_{f(i)} = \sum_{i=0}^\infty a_i.\]
\end{theorem}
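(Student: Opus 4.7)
The plan is to introduce the rearranged hypersequence $b\eqdef(a_{f(i)})_{i\in\alpha}$ and to prove the stronger auxiliary claim that for every $\beta\le\alpha$, $(\sigma b)_\beta$ is defined and equals $T_\beta\eqdef\sum_{i\in\omega}a_i\cdot\ind_{f[\beta]}(i)$, where the right-hand side is the sum of an ordinary series that absolutely converges in $X$ by Lemma~\cref{dct} via the pointwise bound $\norm{a_i\cdot\ind_{f[\beta]}(i)}\le\norm{a_i}$. Once this claim is in hand, plugging in $\beta=\alpha$ and using the surjectivity of $f$ gives $f[\alpha]=\omega$, hence $(\sigma b)_\alpha=T_\alpha=\sum_{i=0}^\infty a_i$. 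Absolute convergence of $\Sigma b$ then follows by running exactly the same argument with $\norm{a_i}$ in place of $a_i$, yielding convergence of the norm series to $\sum_{i=0}^\infty\norm{a_i}$.

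I would prove the auxiliary claim by transfinite induction on $\beta\le\alpha$. The base case $\beta=0$ is immediate, as both sides are $0$. In the successor case $\beta=\gamma+1$, the inductive hypothesis gives $(\sigma b)_\gamma=T_\gamma$, and since $f(\gamma)\notin f[\gamma]$ by injectivity of $f$, we get $T_{\gamma+1}=T_\gamma+a_{f(\gamma)}$, matching the successor clause of Definition~\cref{def:hypersequence}. The interesting case is when $\beta$ is a limit ordinal; this always arises at $\beta=\omega$ during the induction, and also covers $\beta=\alpha$ whenever $\alpha$ itself is a limit.

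For the limit step I would use the absolute convergence of $\Sigma(a_i)_{i\in\omega}$ to pick, given $\epsilon>0$, an $N\in\omega$ with $\sum_{i\ge N}\norm{a_i}<\epsilon$. Since $f[\beta]=\bigcup_{\gamma<\beta}f[\gamma]$ is an increasing union whose finite subset $\{0,\dots,N-1\}\cap f[\beta]$ must already be captured by a single $f[\gamma_0]$ with $\gamma_0<\beta$ (namely, the maximum of finitely many ordinals below the limit $\beta$), every element of $f[\beta]\setminus f[\gamma]$ is at least $N$ for any $\gamma\in[\gamma_0,\beta)$. The triangle inequality for absolutely convergent series then yields
\[\norm{T_\beta-T_\gamma}=\norm{\sum_{i\in\omega}a_i\cdot\ind_{f[\beta]\setminus f[\gamma]}(i)}\le\sum_{i\ge N}\norm{a_i}<\epsilon,\]
so the values $(\sigma b)_\gamma=T_\gamma$ converge to $T_\beta$ as $\gamma\to\beta$, forcing $(\sigma b)_\beta$ to be defined and equal to $T_\beta$. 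The main obstacle is precisely this limit step: absolute convergence of the original series is exactly what makes the contribution of $f[\beta]\setminus f[\gamma]$ uniformly controllable by a tail of $\Sigma\norm{a_i}$, no matter how scattered that set sits in $\omega$, and this is what prevents the conditional-rearrangement pathologies familiar from the real-valued setting.
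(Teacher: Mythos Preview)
Your argument is correct and takes a genuinely different route from the paper. The paper inducts on $\alpha$ itself: at each stage it manufactures a new bijection $g\colon\beta\to\omega$ (and, in the successor case, a shifted sequence $\hat a$) so that the induction hypothesis about strictly smaller index ordinals can be invoked. You instead fix $\alpha$ and $f$ once and for all and induct on $\beta\le\alpha$, proving directly that the partial sum hypersequence of $b=(a_{f(i)})_{i\in\alpha}$ satisfies $(\sigma b)_\beta=T_\beta=\sum_{i\in\omega}a_i\,\ind_{f[\beta]}(i)$ at every index. The limit step in both proofs is essentially the same idea---control $f[\beta]\setminus f[\gamma]$ by a tail of $\Sigma\norm{a_i}$---but your framing makes the successor case a one-line computation and eliminates the need to build auxiliary bijections or re-indexed sequences. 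The paper's approach has the mild advantage that its base case $\alpha=\omega$ is literally the classical rearrangement theorem, so the theorem is visibly a transfinite extension of that result; your approach buys a cleaner, more uniform induction and an explicit closed form for every partial sum along the way.
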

\begin{proof}[Proof (by transfinite induction on $\alpha$)]
If $\alpha=\omega$, the result is a basic theorem of mathematical analysis. Let $\alpha>\omega$ and let $f\colon\alpha\to\omega$ be a bijection. Suppose the statement is true for all ordinals
$\beta\in\alpha\setminus\omega$. We separately consider two cases, where $\alpha$ is respectively a successor ordinal or a limit ordinal.
\bigskip

\noindent\fbox{$\alpha=\beta+1$} We define the (hyper)sequence $(\hat{a}_i)_{i\in\omega}$ with
\[\hat{a}_i \eqdef \begin{cases}
	a_i, & \text{if }i<f(\beta)\\
	a_{i+1}, & \text{if }i \geq f(\beta).
\end{cases}\]
The function $g\colon\beta\to\omega$ defined with
\[g(\gamma) \eqdef \begin{cases}
	f(\gamma), & \text{if } f(\gamma)<f(\beta)\\
	f(\gamma)-1, & \text{if } f(\gamma)>f(\beta)
\end{cases}\]
	is a bijection with the property $a_{f(i)} = \hat{a}_{g(i)}$ for all $i\in\beta$. From Lemma~\cref{abssplit}, we can now conclude
\begin{align*}
\sum_{i=0}^{\infty}a_i &=
\sum_{i=0}^{\infty}\left(a_i\cdot\ind_{\omega\setminus\{f(\beta)\}}(i)\right)+\sum_{i=0}^{\infty}\left(a_i\cdot\ind_{\{f(\beta)\}}(i)\right) = \\
&=\sum_{i=0}^{\infty}\hat{a}_i + a_{f(\beta)}=
\sum_{i\in\beta} \hat{a}_{g(i)} + a_{f(\beta)} =
\sum_{i\in\beta} a_{f(i)} + a_{f(\beta)} =
\sum_{i\in\alpha} a_{f(i)}.
\end{align*}

\noindent\fbox{$\alpha\in\Lim$}
For $\beta\in\alpha\setminus\omega$, we define sequences $(a^\beta_i)_{i\in\omega}$ and $(\ol{a}^\beta_i)_{i\in\omega}$ with
\begin{align*}
a^\beta_i &\eqdef a_i\cdot\ind_{f[\beta]}(i)\textrm{ and}\\
\ol{a}^\beta_i &\eqdef a_i\cdot\ind_{f[\alpha\setminus\beta]}(i),
\end{align*}
where $f[\beta]$ denotes the set $\{f(\gamma):\gamma\in\beta\}$.
As in the previous case, we use Lemma~\cref{abssplit} to conclude
\begin{equation}
	\sum_{i=0}^{\infty}a_i = \sum_{i=0}^{\infty}a^\beta_i + \sum_{i=0}^{\infty}\ol{a}^\beta_i.\tag{$\ast$}
\end{equation}

For an infinite well-ordered set $(S,\leq)$ and all $n\in\omega$, we denote the set obtained by removing the first $n$ elements from $S$ with $S_n$. More precisely, we define 
\[S_0=S;\quad S_{n+1}=S_n\setminus\{\min(S_n)\}.\]
We use this notation to define a bijection $g\colon\beta\to\omega$ that assigns to each ordinal $i\in\beta$ the position of its image $f(i)$ in the well-ordered set $f[\beta]$ with
\[g \eqdef \{(i,n)\in\beta\times\omega : f(i) = \min(f[\beta]_n)\}.\]
In other words, for infinite $f[\beta]=\{f(i_0)<f(i_1)<f(i_2)<\dotsb\}\subseteq\omega$, we set ${g(i_0)\eqdef0}$, ${g(i_1)\eqdef1}$, ${g(i_2)\eqdef2}$, etc. Note that for all $i\in\beta$, $a_{f(i)} = a^\beta_{g(i)}$ and
\[ g(i) = \card \{k\in f[\beta] : k < f(i) \}. \]
From the induction hypothesis and $(\ast)$ we get
\[
\sum_{i\in\beta} a_{f(i)} =
\sum_{i\in\beta}a^\beta_{g(i)} =
\sum_{i=0}^{\infty}a^\beta_i =
\sum_{i=0}^{\infty}a_i - \sum_{i=0}^{\infty}\ol{a}^\beta_i.
\]

\begin{figure}
$$\begin{array}{cccccccccccccccc}
f[\beta] && a_0 & a_1 & a_2 & & \ldots & & a_{n_0} & & a_{n_0+2} & a_{n_0+3}&&& \ldots\\[3pt]
f[\alpha\setminus\beta] && & & & & & & & a_{n_0+1} & & & a_{n_0+4}&& \ldots 
\end{array}$$
\caption{When $\alpha$ is a limit ordinal, we split the series $\Sigma a_i$ in two parts: one with terms indexed by $f[\beta]$ that contains the ``significant'' part of the sum, and the ``small'' rest indexed by $f[\alpha\setminus\beta]$.}
\label{figsplit}
\end{figure}

The idea now is to split the series as shown in Fig.~\cref{figsplit}. To that end, let $\epsilon>0$. We are looking for an ordinal $\beta_0\in\alpha$ such that for all
$\beta\in\alpha\setminus\beta_0$,\linebreak[4] $\sizednorm\bigg{\sum\limits_{i\in\beta}a_{f(i)} -
\sum\limits_{i=0}^{\infty}a_i}<\epsilon$. Let $n_0\in \omega$ such that
$\sum_{i=n_0}^\infty\norm{a_i}<\epsilon$. We define
\[\begin{array}{r@{}r@{}l}
\beta_0 &{}\eqdef{}&
\min(\{\gamma\in\alpha : \gamma>\omega \land (\forall \gamma'\in\alpha)(\gamma'\geq\gamma \imp f(\gamma')>n_0)\})\\[1ex]
&{}={}&\max(\{\omega\}\cup f^{-1}[n_0+1])+1<\alpha.
\end{array}\]
For all $\beta\in\alpha\setminus\beta_0$, due to Lemma~\cref{dct} we have
\[
	\sizednorm\Bigg{\sum_{i\in\beta}a_{f(i)} - \sum_{i=0}^{\infty}a_i} =
	\norm{\sum_{i=0}^{\infty}\ol{a}^\beta_i} \leq
	\sum_{i=0}^{\infty}\sizednorm\big{\ol{a}^\beta_i} \leq
	\sum_{i=n_0}^\infty\norm{a_i}<\epsilon.\eqno\qed
\]\let\qed\relax
\end{proof}

Now we consider what happens when going the other way; we prove that we can safely reorder an absolutely convergent well-ordered series in a Banach space into a standard series. We will actually be able to leverage Theorem~\cref{toomega} in the proof by employing a suitable choice of reordering.

\begin{theorem}\label{fromomega}
Let $\alpha\in\omega_1\setminus\omega$ and let $\Sigma(a_i)_{i\in\alpha}$ be an absolutely convergent well-ordered series in a Banach space $X$. Then for all bijections $f\colon\omega\to\alpha$, the series $\Sigma(a_{f(i)})_{i\in\omega}$ absolutely converges and
\[\sum_{i=0}^\infty a_{f(i)} = \sum_{i\in\alpha} a_i.\]
\end{theorem}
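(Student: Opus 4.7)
The plan is to bootstrap this theorem off Theorem~\cref{toomega}. Suppose, for the moment, that we have already shown that $\Sigma(a_{f(i)})_{i\in\omega}$ absolutely converges. Then the bijection $f^{-1}\colon\alpha\to\omega$ together with the sequence $(b_i)_{i\in\omega}\eqdef(a_{f(i)})_{i\in\omega}$ is exactly the setup of Theorem~\cref{toomega}, which yields that $\Sigma(b_{f^{-1}(j)})_{j\in\alpha}$ absolutely converges to $\sum_{i=0}^{\infty}b_i$. Since $b_{f^{-1}(j)}=a_j$, this gives
\[
\sum_{j\in\alpha}a_j \;=\; \sum_{j\in\alpha}a_{f(f^{-1}(j))} \;=\; \sum_{i=0}^{\infty}a_{f(i)},
\]
closing the proof. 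So the entire burden is to verify absolute convergence of the $\omega$-indexed rearrangement.

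Because $\Sigma(\norm{a_{f(i)}})_{i\in\omega}$ is an ordinary series of non-negative reals, it converges if and only if its partial sums are bounded. Let $M\eqdef\sum_{j\in\alpha}\norm{a_j}$, which exists by the hypothesis of absolute convergence. For each $n\in\omega$, the $n$-th partial sum is a finite sum over $F_n\eqdef\{f(0),\ldots,f(n-1)\}\subseteq\alpha$, so the problem reduces to establishing the following claim: for every finite $F\subseteq\alpha$, $\sum_{j\in F}\norm{a_j}\leq M$.

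The claim, in turn, rests on the monotonicity of the partial sum hypersequence $s\eqdef\sigma(\norm{a_{\cdot}})$: whenever $\beta_1\leq\beta_2\leq\alpha$, we have $s_{\beta_1}\leq s_{\beta_2}$. Because all terms $\norm{a_i}$ are non-negative, this is immediate at successor steps by Definition~\cref{def:hypersequence}(3) and propagates through limits, since a monotone real sequence is bounded above by its limit. I would also note that for every $\beta\leq\alpha$ the restriction $s\restrikcija(\beta+1)$ is a partial sum hypersequence of $(\norm{a_i})_{i\in\beta}$ with maximal possible domain $\beta+1$, so $\Sigma(\norm{a_i})_{i\in\beta}$ converges with sum $s_\beta$. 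With these two facts in hand, I would prove the claim by transfinite induction on $\beta\leq\alpha$, strengthened to say that $\sum_{j\in F}\norm{a_j}\leq s_\beta$ for every finite $F\subseteq\beta$: at $\beta=\delta+1$, split off whether $\delta\in F$ and apply the induction hypothesis at $\delta$; at a limit $\beta$, use finiteness of $F$ to place $F\subseteq\delta$ for some $\delta<\beta$ and combine the induction hypothesis at $\delta$ with monotonicity.

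The main obstacle is the routine but indispensable monotonicity argument for non-negative well-ordered series, together with the careful bookkeeping that shows a finite ``sub-sum'' of a well-ordered series is dominated by the whole. Once this comparison principle is in place, the proof clicks together: partial sums of $\Sigma\norm{a_{f(i)}}$ are bounded by $M$, yielding absolute convergence, and Theorem~\cref{toomega} transfers the sum back.
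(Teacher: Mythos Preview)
Your proposal is correct and follows essentially the same approach as the paper: first bound the partial sums of $\Sigma\norm{a_{f(i)}}$ by $M=\sum_{j\in\alpha}\norm{a_j}$ to obtain absolute convergence, then apply Theorem~\ref{toomega} with the bijection $f^{-1}$ to transfer the sum. The only cosmetic difference is that where you establish the finite-subset comparison $\sum_{j\in F}\norm{a_j}\leq M$ from scratch via monotonicity and transfinite induction, the paper simply invokes Lemma~\ref{dct} (its direct comparison test for well-ordered series), rewriting the $n$-th partial sum as $\sum_{i\in\alpha}\norm{a_i}\cdot\ind_n(f^{-1}(i))$.
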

\begin{proof}
For every $n\in\omega$, we denote the $n$-th partial sum of the series $\Sigma(\sizednorm{}{a_{f(i)}})_{i\in\omega}$ with $s_n$ and compute $$s_n=\sum_{i\in\omega}\sizednorm{}{a_{f(i)}}\cdot\ind_n(i)=\sum_{i\in\alpha}\norm{a_i}\cdot\ind_n(f^{-1}(i))\leq\sum_{i\in\alpha}\norm{a_i},$$ where the last inequality can be obtained directly from Lemma~\cref{dct}. Therefore, $(s_n)_{n\in\omega}$ is a monotonically increasing sequence of real numbers that is bounded from above, so it converges.

To prove the second part of the theorem, let $\hat{a}_i\eqdef a_{f(i)}$ for all $i\in\omega$. We know $f^{-1}\colon\alpha\to\omega$ is a bijection, so Theorem~\cref{toomega} gives us
	\[ \sum_{i=0}^\infty a_{f(i)}=\sum_{i=0}^\infty \hat{a}_i=\sum_{i\in\alpha}\hat{a}_{f^{-1}(i)}=\sum_{i\in\alpha} a_{f(f^{-1}(i))} = \sum_{i\in\alpha} a_i.\eqno\qed\]\let\qed\relax
\end{proof}

The two theorems can now be combined to prove that any reordering, i.e., a bijection between countably infinite ordinals, of an absolutely convergent well-ordered series in a Banach space yields another absolutely convergent well-ordered series where the initial sum is preserved.
\begin{corollary}
Let $\alpha,\beta \in\omega_1\setminus\omega$ and let $\Sigma(a_i)_{i\in\alpha}$ be an absolutely convergent well-ordered series. Then for all bijections $f\colon\beta\to\alpha$, the well-ordered series $\Sigma(a_{f(i)})_{i\in\beta}$ absolutely converges and 
\[ \sum_{i\in\beta} a_{f(i)} = \sum_{i\in\alpha} a_i \,. \]
\end{corollary}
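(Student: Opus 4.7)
The plan is to bridge between the two indexings $\beta$ and $\alpha$ via the standard index set $\omega$, thereby chaining Theorems~\cref{fromomega} and~\cref{toomega}. Since $\alpha\in\omega_1\setminus\omega$, it is countably infinite, so there exists a bijection $g\colon\omega\to\alpha$. First I would apply Theorem~\cref{fromomega} to the absolutely convergent well-ordered series $\Sigma(a_i)_{i\in\alpha}$ with the bijection $g$ to conclude that the standard series $\Sigma(a_{g(i)})_{i\in\omega}$ absolutely converges and satisfies
\[\sum_{i=0}^\infty a_{g(i)}=\sum_{i\in\alpha}a_i.\]

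Next, the composition $h\eqdef g^{-1}\circ f\colon\beta\to\omega$ is a bijection as a composition of bijections. I would then apply Theorem~\cref{toomega} to the now-established absolutely convergent standard series $\Sigma(a_{g(i)})_{i\in\omega}$ with bijection $h$, concluding that $\Sigma(a_{g(h(i))})_{i\in\beta}$ absolutely converges with sum $\sum_{i=0}^\infty a_{g(i)}$. Since $g\circ h = g\circ g^{-1}\circ f = f$, the terms $a_{g(h(i))}$ coincide with $a_{f(i)}$, so this yields precisely the claimed absolute convergence of $\Sigma(a_{f(i)})_{i\in\beta}$ together with the equality
\[\sum_{i\in\beta}a_{f(i)}=\sum_{i=0}^\infty a_{g(i)}=\sum_{i\in\alpha}a_i.\]

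There is essentially no substantive obstacle, since the hard work is already contained in Theorems~\cref{toomega} and~\cref{fromomega}. The only point that merits explicit attention is verifying that the indices compose correctly, so that the intermediate standard series produced in the first step is exactly the one on which the second step is predicated; this is immediate from $g\circ g^{-1}=\mathrm{id}_\alpha$. The corollary is therefore a routine two-step composition through $\omega$, using the countability of $\alpha$ to guarantee the existence of the auxiliary bijection $g$.
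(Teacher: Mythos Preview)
Your proposal is correct and follows essentially the same approach as the paper: pick a bijection $g\colon\omega\to\alpha$, apply Theorem~\ref{fromomega} to pass from $\alpha$ to $\omega$, then apply Theorem~\ref{toomega} with the composed bijection $g^{-1}\circ f\colon\beta\to\omega$ to pass from $\omega$ to $\beta$. Your write-up is in fact slightly more explicit than the paper's in tracking absolute convergence at each step.
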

\begin{proof}
	Let $g\colon\omega\to\alpha$ be a bijection. Then $(g^{-1}\circ f)\colon\beta\to\omega$ is also a bijection. If we define $\hat{a}_i\eqdef a_{g(i)}$ for all $i\in\omega$, from Theorems~\ref{toomega}~and~\ref{fromomega} we get
	\[\sum_{i\in\alpha}a_i=\sum_{i=0}^\infty a_{g(i)}=\sum_{i=0}^\infty \hat{a}_i=\sum_{i\in\beta} \hat{a}_{(g^{-1}\circ f)(i)}=\sum_{i\in\beta}a_{f(i)}.\eqno\qed\]\let\qed\relax
\end{proof}

\section{Conclusion}
In this paper, we wanted to take a step towards a general treatise of the
effects of reorderings on the convergence and sum of well-ordered series. Our
main focus was on reordering absolutely convergent well-ordered series in
Banach spaces. Such reorderings often occur in mathematical practice, so
identifying a general underlying principle is of practical as well as
theoretical interest.

To define the notion of well-ordered series itself, we followed the modern way
of employing transfinite recursion over the structure of the index set. Our
chosen setting of Banach spaces may be possible to improve upon when absolute
convergence is replaced by a different kind of convergence; in the future,
we would like to extend our analysis to more general uniform spaces, such
as Hausdorff topological abelian groups, and investigate in detail the effects
of reorderings on the conditional and unconditional convergence of well-ordered
series in such a setting.

\bigskip
\acknowledgements{We would like to thank Zvonimir Šikić for introducing the problem of rearranging well-ordered series to us. We also thank Siniša Miličić, Filip Nikšić, Zvonimir Sviben, Domagoj Vrgoč, and Mladen Vuković for helpful comments and discussions.}

\bibliographystyle{abbrv}

\end{document}